\let\origsection=\section %save orignal definition
\def\section{\@ifstar{\origsection*}{\mysection}}
\def\mysection{\@startsection{section}{1}\z@{.7\linespacing\@plus\linespacing}{.5\linespacing}{\normalfont\scshape\centering\S}}
\renewcommand{\PrintDOI}[1]{\doi{#1}}
\def\@setdate{\datename\ \@date}
\def\@setaddresses{\par
  \nobreak \begingroup
\footnotesize
  \def\author##1{\nobreak\addvspace\bigskipamount}%
  \def\\{\unskip, \ignorespaces}%
  \interlinepenalty\@M
  \def\address##1##2{\begingroup
    \par\addvspace\bigskipamount\indent
    \@ifnotempty{##1}{(\ignorespaces##1\unskip) }%
    {\scshape\ignorespaces##2}\par\endgroup}%
  \def\curraddr##1##2{\begingroup
    \@ifnotempty{##2}{\nobreak\indent{\itshape Current address}%
      \@ifnotempty{##1}{, \ignorespaces##1\unskip}\/:\space
      ##2\par}\endgroup}%
  \def\email##1##2{\begingroup
    \@ifnotempty{##2}{\nobreak\indent{\itshape Email addresses}%
      \@ifnotempty{##1}{, \ignorespaces##1\unskip}\/:\space
      \ttfamily##2\par}\endgroup}%
  \def\urladdr##1##2{\begingroup
    \@ifnotempty{##2}{\nobreak\indent{\itshape URL}%
      \@ifnotempty{##1}{, \ignorespaces##1\unskip}\/:\space
      \ttfamily##2\par}\endgroup}%
  \addresses
  \endgroup
}
\let\polishlcross=\l
\def\l{\ifmmode\ell\else\polishlcross\fi}
\renewcommand{\setminus}{\smallsetminus}
\def\moverlay{\mathpalette\mov@rlay}
\def\mov@rlay#1#2{\leavevmode\vtop{%
   \baselineskip\z@skip \lineskiplimit-\maxdimen
   \ialign{\hfil$\m@th#1##$\hfil\cr#2\crcr}}}
\newcommand{\charfusion}[3][\mathord]{
    #1{\ifx#1\mathop\vphantom{#2}\fi
        \mathpalette\mov@rlay{#2\cr#3}
      }
    \ifx#1\mathop\expandafter\displaylimits\fi}
\let\epsilon\varepsilon
\newtheoremstyle{case}{}{}{\normalfont}{}{\itshape}{:}{ }{}
\newtheorem{thm}{Theorem}
\newtheorem{lem}[thm]{Lemma}
\theoremstyle{definition}
\newtheorem{defn}[thm]{Definition}
\newtheorem{rem}[thm]{Remark}
\newtheoremstyle{case}{}{}{\normalfont}{}{\itshape}{\normalfont:}{ }{}
\theoremstyle{case}
\def\ex{\text{\rm ex}}
\def\forb{\text{\rm Forb}}
\let\subset\subseteq
\def\({\left(}
\def\){\right)}
\def\[{\left[}
\def\]{\right]}
\def\llceil{\left\lceil}
\def\rrceil{\right\rceil}
\DeclareMathOperator{\im}{{\rm im}}
\newcommand*\patchAmsMathEnvironmentForLineno[1]{%
\expandafter\let\csname old#1\expandafter\endcsname\csname #1\endcsname
\expandafter\let\csname oldend#1\expandafter\endcsname\csname end#1\endcsname
\renewenvironment{#1}%
{\linenomath\csname old#1\endcsname}%
{\csname oldend#1\endcsname\endlinenomath}}% 
\newcommand*\patchBothAmsMathEnvironmentsForLineno[1]{%
\patchAmsMathEnvironmentForLineno{#1}%
\patchAmsMathEnvironmentForLineno{#1*}}%
\begin{document}
%\linenumbers 
\onehalfspace
%\doublespace

%TITLE, ETC
\title{On hypergraphs without loose cycles}
 \author[
 Han
 \and Kohayakawa
 ]
 {
 Jie Han
 \and Yoshiharu Kohayakawa
  }

\shortdate
\yyyymmdddate
\settimeformat{ampmtime}
\date{\today, \currenttime}
\footskip=28pt

\address{Instituto de Matem\'atica e Estat\'{\i}stica, Universidade de
    S\~ao Paulo, S\~ao Paulo, Brazil}
\email{\{jhan|yoshi\}@ime.usp.br}

\thanks{%
  The first author is supported by FAPESP (2014/18641-5,
  2013/03447-6).  The second author is partially supported by FAPESP
  (2013/03447-6, 2013/07699-0), CNPq (310974/2013-5, 459335/2014-6)
  and NUMEC/USP (Project MaCLinC)}

\begin{abstract}
  Recently, Mubayi and Wang showed that for $r\ge 4$ and $\ell \ge 3$,
  the number of $n$-vertex $r$-graphs that do not contain any loose
  cycle of length $\ell$ is at most
  $2^{O( n^{r-1} (\log n)^{(r-3)/(r-2)})}$.  We improve this bound to
  $2^{O( n^{r-1} \log \log n) }$.
\end{abstract}

\maketitle

\section{Introduction}
Let two graphs $G$ and $H$ be given.  The graph~$G$ is called
\emph{$H$-free} if it does not contain any copy of $H$ as a subgraph.
One of the central problems in graph theory is to determine the
extremal and typical properties of $H$-free graphs on $n$~vertices.
For example, one of the first influential results of this type is the
Erd\H{o}s--Kleitman--Rothschild theorem~\cite{EKRo}, which, for
instance, implies that the number of triangle-free graphs with vertex
set $[n]=\{1,\dots,n\}$ is $2^{n^2/4+o(n^2)}$.  This has inspired a
great deal of work on counting the number of $H$-free graphs.  For an
overview of this line of research, the reader is referred to, e.g.,
\cite{bollobas98:_hered,proemel01:_asymp}.  For a recent, exciting
result in the area, see~\cite{morris16:_c}, which also contains a good
discussion of the general area, with several pointers to the
literature.  These problems are closely related to the so-called
\emph{Tur\'an problem}, which asks to determine the maximum possible
number of edges in an $H$-free graph.  More precisely, given an
$r$-uniform hypergraph (or $r$-graph) $H$, the \emph{Tur\'an number}
$\ex_r(n, H)$ is the maximum number of edges in an $r$-graph~$G$ on
$n$~vertices that is $H$-free.  Let $\forb_r(n, H)$ be the set of all
$H$-free $r$-graphs with vertex set $[n]$.  Noting that the subgraphs
of an $H$-free $r$-graph~$G$ are also $H$-free, we trivially see that
$|\forb_r(n, H)|\ge 2^{\ex_r(n, H)}$, by considering an $H$-free
$r$-graph~$G$ on~$[n]$ with the maximum number of edges and all its
subgraphs.  On the other hand for, fixed $r$ and $H$,
\begin{equation}
  \label{eq:1}
  |\forb_r(n, H)| \le \sum_{1\le i\le \ex_r(n, H)} \binom{\binom{n}{r}}{i} = 2^{O(\ex_r(n, H) \log n)}.  
\end{equation}
Hence the above simple bounds differ by a factor of $\log n$ in the exponent,
and all existing results support that this $\log n$ factor should be
unnecessary, i.e., the trivial lower bound should be closer to the truth.

There are very few results in the case $r>2$ and
$\ex_r(n, H) = o(n^r)$.  The only known case is when $H$ consists of
two edges sharing $t$ vertices~\cite{BDDLS, FrKu}.  Very recently,
Mubayi and Wang~\cite{MuWa} studied $|\forb_r(n, H)|$ when $H$ is a
loose cycle.  Given $\ell\ge 3$, an \emph{$r$-uniform loose cycle
  $C_\ell^r$} is an $\ell(r-1)$-vertex $r$-graph whose vertices can be
ordered cyclically in such a way that the edges are sets of
consecutive $r$ vertices and every two consecutive edges share exactly
one vertex.  When $r$ is clear from the context, we simply
write~$C_\ell$.  
% In this note, we sometimes simply say `cycle' instead
% of `loose cycle'.

\begin{thm}\cite{MuWa}\label{thm1}
  For every $\ell \ge 3$ and~$r\ge 4$, there exists
  $c=c(r,\ell)$ such that 
  \begin{equation}
    \label{eq:2}
    |\forb_r(n, C_{\ell})| < 2^{c n^{r-1} (\log n)^{(r-3)/(r-2)}}
  \end{equation}
  for all~$n$.  For $\ell \ge 4$ even, there exists $c=c(\ell)$ such
  that $|\forb_3(n, C_{\ell})| < 2^{c n^{2}}$ for all~$n$.
\end{thm}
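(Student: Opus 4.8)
The plan is to apply the hypergraph container method (Balogh--Morris--Samotij, Saxton--Thomason), taking a balanced supersaturation theorem for loose cycles as the main combinatorial input. Write $N=\binom{n}{r}$ for the number of potential edges, and recall (F\"uredi--Jiang; Kostochka--Mubayi--Verstra\"ete) that $\ex_r(n,C_\ell)=\Theta(n^{r-1})$; fix $K$ with $\ex_r(n,C_\ell)\le Kn^{r-1}$. I would form the $\ell$-uniform \emph{copy hypergraph} $\mathcal H$ whose vertices are the $N$ potential $r$-edges and whose hyperedges are the edge sets of the copies of $C_\ell$; then $\forb_r(n,C_\ell)$ is precisely the family of independent sets of $\mathcal H$. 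The container theorem then supplies a collection $\mathcal C$ of at most $\binom{N}{\le\tau N}$ containers, each spanning $o(e(\mathcal H))$ hyperedges and hence, by supersaturation, having at most $Kn^{r-1}$ edges, with every $C_\ell$-free $r$-graph contained in some $C\in\mathcal C$. This yields $|\forb_r(n,C_\ell)|\le\binom{N}{\le\tau N}2^{Kn^{r-1}}=2^{O(\tau N\log(1/\tau)+n^{r-1})}$, so the task reduces to making the spread parameter $\tau$ as small as the supersaturation permits.

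For the supersaturation input I would first show, by greedily embedding a loose path and closing it up, that every $G$ on $[n]$ with $e(G)\ge Kn^{r-1}$ contains $\Omega((e(G)/n)^\ell)$ copies of $C_\ell$. The delicate point is the \emph{balanced} refinement: for each $1\le j\le\ell$ one must bound the maximum number $\Delta_j$ of copies through a fixed set of $j$ edges and verify a decay $\Delta_j=O(\tau^{\,j-1}\Delta_1)$. The $r-2$ private vertices on each edge provide room to spread the copies, but the single shared link vertices are the bottleneck: a vertex of degree $d$ lies in $\binom{d}{2}$ cherries and hence in many copies overlapping in few edges, which inflates $\Delta_2$ and the higher codegrees. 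Working below a degree threshold and encoding the few heavy vertices separately, the best attainable spread is about $\tau\approx n^{-1}(\log n)^{-1/(r-2)}$; since $N=\Theta(n^r)$ and $\log(1/\tau)=\Theta(\log n)$, this gives $\tau N\log(1/\tau)=\Theta(n^{r-1}(\log n)^{(r-3)/(r-2)})$, reproducing the stated exponent $1-1/(r-2)=(r-3)/(r-2)$.

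For $r=3$ and $\ell$ even the same scheme runs, but the evenness removes the logarithmic loss. Reading off the link vertices, a $3$-uniform loose cycle $C_\ell$ maps to an even cycle of length $\ell$ in an auxiliary graph on $[n]$, and even cycles form a balanced (bipartite) obstruction: the copy hypergraph then satisfies the codegree condition with $\tau=\Theta(1/n)$, exactly as in the Bondy--Simonovits and Morris--Saxton theory of $C_{2k}$-free graphs, where the number of such graphs is $2^{O(\ex)}$ with no logarithmic factor. As $\ex_3(n,C_\ell)=\Theta(n^2)$, this produces $|\forb_3(n,C_\ell)|\le 2^{O(n^2)}$.

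The main obstacle is the balanced supersaturation estimate for $r\ge4$: a clean upper bound on every higher codegree $\Delta_j$ of the copy hypergraph, uniform over degree-irregular hosts $G$. The plain supersaturation count and the final container bookkeeping are routine by comparison. The logarithmic power is essentially forced here by the single-vertex overlaps of consecutive edges, which cap the spread $\tau$; improving the bound further, as announced in the abstract, should come from circumventing this bottleneck rather than optimising around it.
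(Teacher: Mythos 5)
Your route is genuinely different from the one behind this theorem: the proof in question (due to Mubayi--Wang, and mirrored by the present paper's improvement) does not use containers at all. It counts edge-colored $(r-1)$-graphs $(G,\chi)$ whose extension $G^\chi$ is $C_\ell^r$-free (the quantity $g_r(n,\ell)$), decomposes $G$ into balanced complete $(r-1)$-partite pieces with parts of a carefully chosen size $s$, and bounds the number of admissible colorings of each piece via the Tur\'an-type Lemma~\ref{lem10}; the exponent $(\log n)^{(r-3)/(r-2)}$ comes out of optimizing $s$, not out of a container spread parameter. That said, the decisive issue with your proposal is a genuine gap, not merely a difference of method.

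The gap is the balanced supersaturation theorem itself. Everything in your argument funnels through the assertion that the copy hypergraph of $C_\ell$ admits the codegree decay $\Delta_j=O(\tau^{j-1}\Delta_1)$ with $\tau\approx n^{-1}(\log n)^{-1/(r-2)}$, and you give no derivation of this: the value of $\tau$ is read off backwards from the exponent you are trying to prove. You correctly identify the obstruction (a link vertex of large degree lies in many cherries, inflating $\Delta_2$ and the higher codegrees), but ``working below a degree threshold and encoding the few heavy vertices separately'' is not an argument --- for even cycles in graphs this step is precisely the hard core of the Morris--Saxton theorem, and no analogue for loose $r$-uniform cycles is proved or cited here. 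Two further steps are asserted rather than proved: first, the passage from ``each container spans $o(e(\mathcal H))$ copies'' to ``each container has at most $Kn^{r-1}$ edges'' requires iterating the container lemma across density scales, and a naive iteration costs an extra logarithmic factor in the exponent that you would have to absorb; second, for $r=3$ and $\ell$ even, projecting each triple to a pair of link vertices produces a multigraph, and $C_\ell^3$-freeness of the $3$-graph does not translate into $C_\ell$-freeness of any auxiliary simple graph, so the appeal to the Bondy--Simonovits/Morris--Saxton theory does not apply as stated. As written, the proposal is a plausible program, not a proof.
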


Since $\ex_r(n, C_\ell) = \Omega (n^{r-1})$ for all
$r\ge 3$~\cite{KMV, FurediJiang}, Theorem~\ref{thm1} implies that
$|\forb_3(n, C_{\ell})| = 2^{\Theta(n^{2})}$ for even $\ell \ge 4$.
Mubayi and Wang also conjecture that similar results should hold for
$r=3$ and all~$\ell\geq3$ odd and for all $r\ge4$ and $\ell\ge3$,
i.e., $|\forb_r(n, C_{\ell})| = 2^{\Theta(n^{r-1})}$ for all such~$r$
and~$\ell$.  In this note we give the following improvement of
Theorem~\ref{thm1} for $r\ge 4$.

\begin{thm}\label{thm2}
  For every $\ell \ge 3$ and $r\ge 4$, we have
  \begin{equation}
    \label{eq:3}
    |\forb_r(n, C_{\ell})| < 2^{2r^2\ell n^{r-1}\log\log n}
  \end{equation}
  for all sufficiently large~$n$.
\end{thm}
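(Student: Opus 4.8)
The plan is to count the members of $\forb_r(n, C_\ell)$ as the independent sets of an auxiliary ``copy hypergraph'' $\mathcal H$ whose vertex set is $\binom{[n]}{r}$ (the potential edges of an $r$-graph on $[n]$) and whose hyperedges are the $\ell$-element sets of edges that form a copy of $C_\ell$; by construction the independent sets of $\mathcal H$ are exactly the $C_\ell$-free $r$-graphs on $[n]$. To count them efficiently I would use the hypergraph container / Kleitman--Winston fingerprint method, whose crucial input is a supersaturation estimate. Concretely, I first want a constant $K=K(r,\ell)$ such that every $r$-graph $G$ on $[n]$ with $e(G)\ge Kn^{r-1}$ contains many copies of $C_\ell$, and moreover that these copies can be chosen \emph{balanced}: no single edge, and more generally no small collection of edges, lies in an unusually large fraction of them. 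This balancedness is precisely what controls the co-degrees of $\mathcal H$ and hence the efficiency of the container step.

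To prove such a supersaturation statement I would exploit the \emph{linear} (sparse) structure of loose cycles. Since $\ex_r(n, C_\ell)=O(n^{r-1})$, a graph with $e(G)\ge Kn^{r-1}$ has average degree $\Omega(Kn^{r-2})$, so most vertices lie in $\Omega(Kn^{r-2})$ edges and one can grow loose paths greedily: from the current endpoint $v$ one extends by an edge through $v$ whose other $r-1$ vertices are new, and since a partial loose path has used only $O(\ell r)$ vertices, the edges through $v$ that meet an already-used vertex number at most $O(\ell r)$ times a pair-codegree, a lower-order fraction of $\deg(v)$. Multiplying the number of valid extensions over the $\ell$ steps, and then counting the ways to close the path into a cycle, bounds the number of copies of $C_\ell$ from below; keeping track at each step of how many extensions run through any fixed edge or small set of edges yields the co-degree bounds. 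I expect this balanced supersaturation to be the main obstacle: the greedy growth must be organised so that the family of copies produced is simultaneously large and spread out, and controlling the higher co-degrees --- collections of two or more edges lying in many common copies --- is where the combinatorics is most delicate.

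Given the supersaturation estimate, I would feed it into the container method to produce a family $\mathcal C$ of containers such that every $G\in\forb_r(n, C_\ell)$ is a subgraph of some $C\in\mathcal C$ with $e(C)<Kn^{r-1}$; then $|\forb_r(n, C_\ell)|\le|\mathcal C|\,2^{Kn^{r-1}}$ and everything reduces to bounding $|\mathcal C|$. A crude application gives only $|\mathcal C|\le 2^{O(n^{r-1}\log n)}$, which is exactly the wasteful factor one wishes to remove: each of the $\approx n^{r-1}$ ``fingerprint'' edges recorded by the container algorithm nominally costs $\log\binom nr=O(\log n)$ bits. The gain should come from balancedness: because one can always probe an edge that lies in many copies of $C_\ell$, each retained fingerprint edge forces the set of still-available edges to shrink by a $\mathrm{polylog}(n)$ factor, and the resulting accounting reduces the effective cost per fingerprint edge from $\log n$ to $O(\log\log n)$, giving $|\mathcal C|\le 2^{O(n^{r-1}\log\log n)}$.

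Finally I would assemble the pieces and optimise constants. Summing over the at most $2^{O(n^{r-1}\log\log n)}$ containers, each contributing at most $2^{Kn^{r-1}}$ subgraphs, yields $|\forb_r(n, C_\ell)|\le 2^{O(n^{r-1}\log\log n)}$, and a careful choice of the degree threshold used in the probing, of the supersaturation constant $K$, and of the fingerprint length should produce the explicit exponent $2r^2\ell\,n^{r-1}\log\log n$ for all large $n$. The two delicate quantitative points are (i) making the co-degree control in the supersaturation strong enough that the per-probe saving is a genuine $\mathrm{polylog}(n)$ factor, and (ii) checking that encoding the high-degree part of $G$ separately contributes only lower-order terms and does not reintroduce a stray $\log n$.
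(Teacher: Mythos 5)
Your proposal takes a route that is entirely different from the paper's. The paper never touches containers or supersaturation: following Mubayi and Wang, it reduces the problem to counting edge-colored $(r-1)$-graphs $(G,\chi)$ whose extension $G^\chi$ is $C_\ell^r$-free, decomposes an arbitrary $(r-1)$-graph into $O(n^{r-1}\log n/s^{r-1})$ balanced $(r-1)$-partite pieces with parts of size $s=\lfloor(\log n)^2\rfloor$ via random partitions, and observes that on each piece a valid coloring can use only $O(s^{r-2})$ colors (else a strongly rainbow $C_\ell^{r-1}$, hence a $C_\ell^r$, appears by the multipartite Tur\'an bound). Each edge then costs only $\log(cs^{r-2})=O(\log\log n)$ bits to color, which is the entire source of the improvement. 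That argument is elementary and short; your container programme, even if it can be completed, is a much heavier machine.

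More importantly, as written your proposal has two genuine gaps. First, the balanced supersaturation theorem --- which you correctly identify as the main obstacle --- is not established, and the greedy loose-path-growing sketch does not obviously deliver what the container iteration actually needs: a family of copies of $C_\ell$ whose size grows polynomially in $e(G)/n^{r-1}$ \emph{together with} co-degree bounds on every small set of edges. Controlling how many of the greedily produced cycles pass through a fixed pair (or triple, \dots) of hyperedges is precisely where such arguments break or require substantial new ideas, and nothing in the sketch addresses it beyond an expectation that it works. Second, and independently, the accounting step that converts balancedness into a $\log\log n$ cost per fingerprint edge is asserted rather than derived. A single application of the container lemma with fingerprints of size $\Theta(n^{r-1})$ drawn from $\binom{[n]}{r}$ costs $\exp\bigl(\Theta(n^{r-1}\log n)\bigr)$ regardless of how balanced the copies are, since one must still record \emph{which} edges were probed; the claim that ``each retained fingerprint edge forces the available set to shrink by a polylog factor'' does not change this, because shrinking the container does not shrink the ground set from which the next probed edge must be named until the container itself has size $n^{r-1}\mathrm{polylog}(n)$, and getting there is where the cost is incurred. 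Beating the $\log n$ factor by containers requires an iterated scheme with density-dependent supersaturation and a delicate telescoping of fingerprint costs, none of which is set up here; and the specific constant $2r^2\ell$ in \eqref{eq:3} is simply postulated. So while the container strategy is a legitimate (and, with enough work, even stronger) line of attack, the proposal does not yet constitute a proof of Theorem~\ref{thm2}.
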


%Throughout this note we omit floors and ceilings and 
In what follows, logarithms have base $2$.

\section{Edge-colored $r$-graphs}

Let $r\ge 2$ be an integer.  An \textit{$r$-uniform hypergraph}~$G$
(or \textit{$r$-graph}) on a \textit{vertex set}~$X$ is a collection
of $r$-element subsets of~$X$, called \textit{hyperedges} or simply
\textit{edges}.  The vertex set~$X$ of~$G$ is denoted~$V(G)$.  We
write~$e(G)$ for the number of edges in~$G$.  An \emph{$r$-partite
  $r$-graph~$G$} is an $r$-graph together with a vertex partition
$V(G)=V_1\cup \cdots \cup V_r$, such that every edge of~$G$ contains
exactly one vertex from each~$V_i$ ($i\in[n]$).  If all such edges are
present in~$G$, then we say that~$G$ is \emph{complete}.  We call an
$r$-partite $r$-graph \emph{balanced} if all parts in its vertex
partition have the same size.  Let $K_r(s)$ denote the complete
$r$-partite $r$-graph with $s$ vertices in each vertex class.

We now introduce some key definitions from~\cite{MuWa}, which are also
essential for us.  Given an $(r-1)$-graph $G$ with
$V(G)\subseteq [n]$, a \emph{coloring function} for~$G$ is a function
$\chi\:G\rightarrow [n]$ such that $\chi(e) = z_e\in [n]\setminus e$
for every $e\in G$.  We call $z_e$ the \emph{color} of~$e$.
%The vector of colors $N_G=(z_e)_{e\in G}$ is called an \emph{edge-coloring} of $G$.
The pair $(G,\chi)$ is an \emph{edge-colored} $(r-1)$-graph.

Given $G$, each edge-coloring $\chi$ of~$G$ gives an $r$-graph
$G^\chi=\big\{e\cup \{z_e\}\: e\in G\big\}$, called the
\emph{extension of $G$ by~$\chi$}.  When there is only one coloring
that has been defined, we write $G^*$ for~$G^\chi$.  Clearly any
subgraph $G'\subseteq G$ also admits an extension by~$\chi$, namely,
$(G')^\chi=\big\{e\cup \{z_e\}\: e\in G'\big\}\subseteq G^*$.  If
$G'\subseteq G$ and $\chi\restriction_{G'}$ is one-to-one and
$z_e\notin V(G')$ for all $e\in G'$, then $G'$ is called
\emph{strongly rainbow colored}.  We state the following simple remark
explicitly for later reference.

\begin{rem}
  \label{rem:strongrbw}
  A strongly rainbow colored copy of~$C_\ell^{r-1}$ in~$G'$ gives
  rise to a copy of $C_\ell^r$ in~$G^*$.
\end{rem}

The following definition is crucial.

\begin{defn}[$g_r(n,\ell)$, \cite{MuWa}]
 For $r\ge 4$ and $\ell\ge 3$, let $g_r(n,\ell)$ be the number of
 edge-colored $(r-1)$-graphs $G$ with $V(G)\subseteq [n]$ such that
 the extension~$G^*$ is $C_\ell^r$-free. 
\end{defn}

The function~$g_r(n,\ell)$ above counts the number of pairs~$(G,\chi)$
with~$G^\chi\in\forb_r(n,C_\ell^r)$.  Mubayi and Wang~\cite{MuWa}
proved that~$g_r(n,\ell)$ is non-negligible in comparison
with~$|\forb_r(n,C_\ell)|$ and were thus able to deduce
Theorem~\ref{thm1}.  The following estimate on $g_r(n, \ell)$ is
proved in~\cite{MuWa}.

\begin{lem}[\cite{MuWa}, Lemma~8]\label{lem08}
  For every $r\ge 4$ and $\ell\ge 3$ there is~$c=c(r,\ell)$ such that
  for all large enough~$n$ we have~$\log g_r(n, \ell)\le cn^{r-1}(\log
  n)^{(r-3)/(r-2)}$. 
\end{lem}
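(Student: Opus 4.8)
The plan is to bound, for each underlying $(r-1)$-graph $G$ with $V(G)\subseteq[n]$, the number $N(G)$ of coloring functions $\chi$ for which the extension $G^*=G^\chi$ is $C_\ell^r$-free, and then to sum over $G$. Since there are at most $2^{\binom{n}{r-1}}=2^{O(n^{r-1})}$ choices for $G$, a factor negligible against the target, it suffices to show $N(G)\le 2^{O(n^{r-1}(\log n)^{(r-3)/(r-2)})}$ for every fixed $G$. By Remark~\ref{rem:strongrbw}, if $G^*$ is $C_\ell^r$-free then $(G,\chi)$ contains no strongly rainbow copy of $C_\ell^{r-1}$; so throughout I count colorings of a fixed $G$ that create no strongly rainbow $C_\ell^{r-1}$.

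The first structural input is that strongly rainbow subgraphs are sparse: if $R\subseteq G$ is strongly rainbow colored, then $R$ is itself $C_\ell^{r-1}$-free (a $C_\ell^{r-1}$ inside $R$ would be strongly rainbow, hence forbidden by Remark~\ref{rem:strongrbw}), so $e(R)\le \ex_{r-1}(n,C_\ell^{r-1})=O(n^{r-2})$. Taking one edge per color class, and discarding the at most $n$ representatives whose color lies in the (small) vertex set, already shows that any admissible $\chi$ uses only $O(n^{r-2})$ distinct colors. This alone does not suffice: with a polynomial palette, naming the color of each of the up to $\Theta(n^{r-1})$ edges still costs $\Theta(\log n)$ bits per edge, which merely reproduces the trivial bound~\eqref{eq:1}. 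The entire point of the $(\log n)^{(r-3)/(r-2)}$ saving is to beat this per-edge cost.

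To do so I would pass from loose cycles to a denser, more robust witness. The key embedding step is that a rainbow copy of the complete $(r-1)$-partite $(r-1)$-graph $K_{r-1}(s)$ with $s\ge s_0(\ell)$ already forces a strongly rainbow $C_\ell^{r-1}$: a loose $\ell$-cycle embeds transversally into $K_{r-1}(s)$ once $s$ is large, and the inherited coloring is strongly rainbow. Hence an admissible $(G,\chi)$ contains no rainbow $K_{r-1}(s)$. Fixing a frequency threshold $d$ and calling a color \emph{rare} if its class has fewer than $d$ edges, a supersaturation argument together with the Erd\H{o}s--K\H{o}v\'ari--S\'os--Tur\'an bound $\ex_{r-1}(n,K_{r-1}(s))=O(n^{(r-1)-s^{-(r-2)}})$ bounds the number of rare edges: if there were more than about $d\cdot n^{(r-1)-s^{-(r-2)}}$ of them, then, since rare colors repeat few times and occupy few vertices, one could clean up to a rainbow $K_{r-1}(s)$ and thus a forbidden cycle. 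The rare edges are encoded directly, and the frequently repeated colors are treated by iterating this dichotomy across a geometric sequence of thresholds, so that the Tur\'an exponent $s^{-(r-2)}$ governs the saving. Optimising $s$ (of order a small power of $\log n$) against the cost of naming colors per edge is what produces the stated exponent $(\log n)^{(r-3)/(r-2)}$.

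The main obstacle is precisely breaking the per-edge $\log n$ barrier: the trivial palette bound cannot see the cycle constraint, so everything rests on the embedding/supersaturation step that converts ``many rare edges'' into a rainbow $K_{r-1}(s)$ and thence into a strongly rainbow $C_\ell^{r-1}$, with quantitatively sharp dependence on $s$ and $d$. Making the passage from a near-rainbow complete partite subgraph to a genuinely rainbow one work uniformly, and then balancing $s$ and the number of frequency scales so as to yield exactly $(r-3)/(r-2)$ rather than a worse power of $\log n$, is the delicate heart of the argument; the reductions of the first two paragraphs are routine by comparison.
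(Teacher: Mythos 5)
Your opening reductions are sound: summing over the at most $2^{O(n^{r-1})}$ underlying $(r-1)$-graphs is harmless, and the one-edge-per-colour argument (discarding colours landing in the vertex set) correctly shows that any admissible $\chi$ satisfies $|\im\chi|=O(n^{r-2})$ --- this is precisely the global version of the first step in the proof of Lemma~\ref{thm5}. You also correctly locate the real difficulty in the per-edge cost of $\Theta(\log n)$ bits. The gap is that the mechanism you propose for beating that cost --- a rare/frequent dichotomy on colours, iterated over geometric frequency thresholds --- cannot do so, because colour frequency is a global statistic and does not shrink the palette available to an individual edge. After the rare edges are removed, the only constraints you have extracted on the remaining colouring are that the palette has size $O(n^{r-2})$ and that each colour is used at least $d$ times; the number of colourings satisfying just these constraints is still $2^{\Theta(e(G)\log n)}$ for polylogarithmic $d$, so no encoding based on them can beat the trivial bound~\eqref{eq:1}. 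The quantitative side of the ``rare edges are few'' step also fails in the relevant regime: with $s$ a small power of $\log n$, the bound $\ex_{r-1}(n,K_{r-1}(s))=O(n^{(r-1)-s^{-(r-2)}})$ gives $n^{-s^{-(r-2)}}=n^{-\Theta(1/\log n)}$, i.e.\ only a bounded (or single power of $\log n$) saving over $n^{r-1}$, so the rare edges cannot be ``encoded directly'' at $\log n$ bits apiece within the target budget.

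The missing idea is \emph{vertex localization}, which is how both \cite{MuWa} and the present paper (which does not reprove Lemma~\ref{lem08} but derives the stronger Lemma~\ref{lem8} from Lemmas~\ref{thm5} and~\ref{thm6}) proceed: decompose $G$ into balanced $(r-1)$-partite pieces $G_i$ with parts of size $s$ --- complete pieces extracted greedily via the Erd\H{o}s/K\H{o}v\'ari--S\'os--Tur\'an theorem in \cite{MuWa}, arbitrary pieces produced by random partitions in Lemma~\ref{thm6} here. Within one piece the argument of Lemma~\ref{thm5} (via Lemma~\ref{lem10}) caps the \emph{local} palette at $O(s^{r-2})$, which is polylogarithmic, so each edge of the piece costs only $O(\log s)$ bits once the piece's palette has been named at cost $O(s^{r-2}\log n)$. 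The exponent $(\log n)^{(r-3)/(r-2)}$ of Lemma~\ref{lem08} then comes from optimizing the palette-naming cost over all pieces against the constraint $s\lesssim(\log n)^{1/(r-2)}$ imposed by the extraction of \emph{complete} pieces --- the very constraint your sketch would have to confront and does not; the paper's improvement to $\log\log n$ comes from dropping completeness so that $s$ may be taken as large as $(\log n)^2$.
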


We improve Lemma~\ref{lem08} as follows.

\begin{lem}\label{lem8}
  For every $r\ge 4$ and $\ell\ge 3$ we
  have
  \begin{equation}
    \label{eq:12}
    \log g_r(n, \ell)\le2rn^{r-1}\log\log n
  \end{equation}
  for all large enough~$n$.
\end{lem}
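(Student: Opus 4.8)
The plan is to count the pairs $(G,\chi)$ directly, using only the weaker consequence of $C^r_\ell$-freeness supplied by Remark~\ref{rem:strongrbw}: if $G^*$ is $C^r_\ell$-free then $(G,\chi)$ has no \emph{strongly rainbow} copy of $C^{r-1}_\ell$. Hence $g_r(n,\ell)$ is at most the number of such pairs. There are at most $2^{\binom{n}{r-1}}=2^{O(n^{r-1})}$ choices for the $(r-1)$-graph~$G$, so writing $N(G)$ for the number of colorings $\chi$ of a fixed $G$ with no strongly rainbow $C^{r-1}_\ell$, it suffices to prove $\log N(G)\le n^{r-1}\log\log n$ for every $G$ and all large~$n$; the cheap factor $2^{O(n^{r-1})}$ is then absorbed into the constant $2r$. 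I will repeatedly use that such a $\chi$ takes at most $n$ distinct values, so the \emph{first appearance} of each color costs at most $n\log n=o(n^{r-1}\log\log n)$ bits in total.

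The mechanism making colors cheap is a \emph{confinement} observation, obtained by revealing the $z_e$ one edge at a time in a fixed order $e_1,\dots,e_m$. Call an already-revealed loose path $P$ on $\ell-1$ edges a \emph{closable rainbow path} for the current edge $e$ if $P\cup\{e\}$ is a copy of $C^{r-1}_\ell$, $\chi$ is injective on $P$, and every color of $P$ lies outside $V(P\cup\{e\})$. If such a $P$ exists then, to avoid completing a strongly rainbow $C^{r-1}_\ell$, the value $z_e$ is forced into the $\ell-1$ colors of $P$ together with the $\le\ell(r-2)$ cycle vertices, a set of fewer than $\ell r$ values. Call $e$ \emph{confined} if it has a closable rainbow path at its reveal time, and \emph{free} otherwise. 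Reconstructing the colors in order, each confined edge contributes at most $\log(\ell r)=O(1)$ to $\log N(G)$, so the confined edges cost only $O(n^{r-1})$ and the entire problem reduces to encoding the free edges.

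The free set $F$ cannot be handled edge by edge, but it is rigid as a block: if a copy of $C^{r-1}_\ell$ lay entirely inside $F$, its last edge $e$ (in the order) would see the other $\ell-1$ edges earlier, so were the copy strongly rainbow those edges would form a closable rainbow path for~$e$, contradicting $e\in F$. Thus $(F,\chi\restriction_F)$ is itself an edge-colored $(r-1)$-graph with \emph{no} strongly rainbow $C^{r-1}_\ell$, and I would encode $\chi\restriction_F$ by a prefix (Huffman) code built from the frequencies of the colors on $F$, at cost at most $|F|\,H+|F|$ bits, where $H\le\log k_F$ is the entropy of the color distribution and $k_F$ the number of colors on $F$. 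When the free coloring is dominated by a few colors this is automatically small: in particular, if $k_F\le\log n$ then the cost is at most $|F|\log\log n\le\binom{n}{r-1}\log\log n$, which already lands inside the target — and it is precisely this $\le\log n$ threshold that produces the iterated logarithm.

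The crux, and the step I expect to be the main obstacle, is the complementary regime $k_F>\log n$, where one must show that high color-entropy forces $F$ to be sparse via supersaturation for loose cycles. Since $\ex_{r-1}(n,C^{r-1}_\ell)=O(n^{r-2})$, a free set that is simultaneously large and genuinely multicolored should contain a loose cycle on $\ell$ distinctly colored edges with all colors outside its $\ell(r-2)$ vertices — a strongly rainbow $C^{r-1}_\ell$, which is forbidden. The naive greedy (grow a rainbow loose path, at each of the $\ell$ steps taking an incident edge of a fresh color avoiding the $O(\ell r)$ used vertices and colors) is obstructed by uneven color classes: one dominant color together with a few tightly clustered minority colors can give a large, many-colored, yet rainbow-cycle-free free set. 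The resolution I envisage is to run the frequency dichotomy \emph{dyadically} over the color-class sizes of $F$, so that a dominant class drops the relevant entropy into the cheap regime while within a single dyadic scale the classes are balanced and the greedy extension goes through after deleting low-degree vertices, and then to sum over the $O(\log n)$ scales. Balancing the $O(\log\log n)$ bits needed to record an edge's scale against the supersaturation bound on the number of edges per scale is what should yield $\log N(G)\le n^{r-1}\log\log n$, and hence~\eqref{eq:12}.
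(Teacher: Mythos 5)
Your confinement step is sound as far as it goes: given the earlier colors, a confined edge has at most $\ell r$ admissible colors (those of, say, the lexicographically first closable rainbow path together with the vertices of the would-be cycle, all computable by the decoder), so the confined edges indeed cost only $O(n^{r-1})$ bits. But the reduction to the free set $F$ buys nothing: the only property you extract from freeness is that $(F,\chi\restriction_F)$ contains no strongly rainbow $C_\ell^{r-1}$, which already holds for all of $(G,\chi)$, and nothing prevents $F=G$ (for instance, whenever closable rainbow paths simply never form). So after the confinement step you face the original problem again, and everything rests on the entropy/supersaturation argument that you yourself flag as the main obstacle and only sketch. That step has a genuine hole. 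Globally, the one-edge-per-color argument caps the number of colors only at $O(\ex_{r-1}(n,C_\ell^{r-1}))+n=O(n^{r-2})$, and this is tight up to the constant: a coloring may use $cn^{r-2}$ colors (for small $c>0$) with every class of size about $n$, all at a single dyadic scale, on a graph with $\Theta(n^{r-1})$ edges. There your Huffman/dyadic encoding costs $\Theta(\log n)$ bits per edge, and to exclude the configuration you would need an anti-Ramsey statement of the form ``a dense $(r-1)$-graph whose coloring has polynomially many balanced color classes contains a strongly rainbow loose cycle.'' That is not implied by $\ex_{r-1}(n,C_\ell^{r-1})=O(n^{r-2})$, and the greedy path-growing you propose fails at the cycle-closing step, where one needs codegree rather than degree control to place the final edge through both endpoints of the path. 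Neither the paper nor your sketch supplies such a statement.

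The missing ingredient is localization rather than entropy. The paper decomposes $G$ (Lemma~\ref{thm6}) into $t\le (n/s)^{r-1}\cdot O(\log n)$ balanced $(r-1)$-partite pieces with parts of size $s=\lfloor(\log n)^2\rfloor$. Within each piece, the one-edge-per-color argument combined with the multipartite Tur\'an bound for loose cycles (Lemma~\ref{lem10}) caps the palette at $O(s^{r-2})=(\log n)^{O(1)}$ colors, so every edge's color costs $O(\log\log n)$ bits with no case analysis, while announcing all $t$ palettes costs $t\cdot O(s^{r-2}\log n)=O(n^{r-1})$ bits precisely because $s=(\log n)^2$. In other words, the paper forces your ``cheap regime'' to hold everywhere by shrinking the ambient vertex set, instead of attempting to prove that the expensive regime cannot occur. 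Your argument lacks this (or any substitute) and is therefore incomplete at its central step.
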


Theorem~\ref{thm2} can be derived from Lemma~\ref{lem8} in the same
way that Theorem~\ref{thm1} is derived from Lemma~\ref{lem08}
in~\cite{MuWa}.  It thus remains to prove Lemma~\ref{lem8}.

\section{Proof of Lemma~\ref{lem8}}

To bound~$g_r(n, \ell)$, we should consider all possible
$(r-1)$-graphs~$G$ and their `valid' edge-colorings.  Let an
$(r-1)$-graph~$G$ be fixed.  The authors of~\cite{MuWa} consider
decompositions of~$G$ into balanced complete $(r-1)$-partite
$(r-1)$-graphs~$G_i$, and obtain good estimates on the number of
edge-colorings of each~$G_i$.  In our proof of Lemma~\ref{lem8}, we
also decompose~$G$ into balanced $(r-1)$-partite $(r-1)$-graphs $G_i$,
but with each~$G_i$ not necessarily complete.  We get our improvement
because certain quantitative aspects of our decomposition are better,
and similar estimates can be shown for the number of edge-colorings of
each~$G_i$.

\begin{defn}[$f_r(n,\ell,G)$]
  \label{def:f_r}
  Let~$r\geq3$ and~$\ell\geq3$ be given and let~$G$ be a balanced
  $(r-1)$-partite $(r-1)$-graph with~$V(G)\subset[n]$.
  Let~$f_r(n,\ell,G)$ be the number of edge-colorings $\chi\:G\to[n]$
  such that~$G^\chi$ is $C_\ell^r$-free.
\end{defn}

\begin{lem}\label{thm5}
  For every $r\ge 4$ and $\ell\ge 3$ there is $c=c(r,\ell)$ such that,
  for any~$G$ as in Definition~\ref{def:f_r}, we have
  \begin{equation}
    \label{eq:4}
    f_r(n,\ell,G)\leq n^{cs^{r-2}}(cs^{r-2})^{e(G)},
  \end{equation}
  where~$s=|V(G)|/(r-1)$.
\end{lem}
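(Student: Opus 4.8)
The plan is to exploit Remark~\ref{rem:strongrbw} in its cleanest form. If a copy of $C_\ell^{r-1}$ in~$G$ receives $\ell$ pairwise distinct colors, all lying in $[n]\setminus V(G)$, then it is strongly rainbow colored: its colors are distinct (so $\chi$ is one-to-one on it) and, being outside $V(G)$, they are in particular outside the copy's own vertex set. By Remark~\ref{rem:strongrbw} this produces a copy of $C_\ell^r$ in $G^\chi=G^*$. Consequently, for any coloring~$\chi$ counted by $f_r(n,\ell,G)$, no copy of $C_\ell^{r-1}$ in~$G$ can have all its edges colored with \emph{distinct} colors from $[n]\setminus V(G)$. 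The whole argument will rest on converting this into a bound on the number of distinct colors outside~$V(G)$ that a valid coloring may use.

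The key step is a rainbow representative subgraph. Fix a valid~$\chi$ and let $D=\{z_e : e\in G,\ z_e\notin V(G)\}$ be the set of colors it uses outside~$V(G)$. For each $z\in D$ I would choose one edge $e_z$ with $z_{e_z}=z$ and set $R=\{e_z:z\in D\}\subseteq G$, so that $e(R)=|D|$ and the edges of~$R$ carry pairwise distinct colors, all outside~$V(G)$. If $R$ contained a copy of $C_\ell^{r-1}$, that copy would be strongly rainbow colored and would force a copy of $C_\ell^r$ in~$G^*$, contradicting validity; hence $R$ is $C_\ell^{r-1}$-free. Since~$R$ is an $(r-1)$-graph on at most $(r-1)s$ vertices, the Tur\'an bound for loose cycles \cite{KMV, FurediJiang} gives $|D|=e(R)\le \ex_{r-1}\big((r-1)s,\,C_\ell^{r-1}\big)\le c_0\big((r-1)s\big)^{r-2}\le c_1 s^{r-2}$ for some $c_1=c_1(r,\ell)$. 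Thus every valid coloring uses at most $c_1 s^{r-2}$ distinct colors outside~$V(G)$.

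To assemble the count I would specify a valid~$\chi$ in two stages. First choose a set $D\subseteq[n]\setminus V(G)$ with $|D|=\lceil c_1 s^{r-2}\rceil$ containing all outside-colors of~$\chi$; there are at most $\binom{n}{\lceil c_1 s^{r-2}\rceil}\le n^{c_2 s^{r-2}}$ such choices. Given~$D$, each edge~$e$ must receive a color in $(V(G)\cup D)\setminus e$, a palette of size at most $(r-1)s+c_1 s^{r-2}\le c_3 s^{r-2}$ (using $r-2\ge 2$, so $s\le s^{r-2}$); this contributes at most $(c_3 s^{r-2})^{e(G)}$. Multiplying the two stages gives $f_r(n,\ell,G)\le n^{c_2 s^{r-2}}(c_3 s^{r-2})^{e(G)}$, which is the claimed inequality with $c=\max(c_2,c_3)$.

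The one external input, and the main obstacle to keep honest, is the upper Tur\'an bound $\ex_{r-1}(N,C_\ell^{r-1})=O(N^{r-2})$ used above; everything else is routine bookkeeping. It is worth noting that this bound also disposes of the apparent edge case in which~$G$ is itself $C_\ell^{r-1}$-free: then $G$ already has only $O(s^{r-2})$ edges, so $|D|\le e(G)=O(s^{r-2})$ for free and the counting is unchanged. Conceptually, the slack $n^{cs^{r-2}}$ is exactly the cost of recording the few colors used outside~$V(G)$, while the factor $(cs^{r-2})^{e(G)}$ reflects that, once this small outside-palette is fixed, each edge is confined to the small set $V(G)\cup D$.
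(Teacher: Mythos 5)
Your proof is correct and follows essentially the same route as the paper: extract a strongly rainbow representative subgraph (one edge per color used outside $V(G)$), invoke a Tur\'an-type bound for loose cycles to cap the number of such colors by $O(s^{r-2})$, and then count colorings by first choosing the small palette and then coloring each edge within it. The only cosmetic differences are that you bound the outside-colors directly (the paper bounds all of $\im\chi$, which is the same up to the additive $(r-1)s$) and that you cite the general bound $\ex_{r-1}(N,C_\ell^{r-1})=O(N^{r-2})$ in place of the paper's $(r-1)$-partite Lemma~\ref{lem10}, which it implies.
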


We use the following result in the proof of Lemma~\ref{thm5}.

\begin{lem}\cite{MuWa}\label{lem10}
  For every $r\ge 3$ and $\ell \ge 3$ there is~$c=c(r,\ell)$ for which
  the following holds.  Let~$G$ be an $r$-partite $r$-graph with
  vertex classes~$V_1,\dots,V_r$ with $|V_i|=s$ for all $i$.
  If~$e(G) > cs^{r-1}$, then~$G$ contains a loose cycle of
  length~$\ell$.
\end{lem}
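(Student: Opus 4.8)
The plan is to prove that an $r$-partite $r$-graph $G$ with parts $V_1,\dots,V_r$ of common size $s$ and $e(G)>cs^{r-1}$ must contain a loose cycle $C_\ell^r$. The natural strategy is a greedy/sequential embedding of the cycle one edge at a time, and the role of the density hypothesis $e(G)>cs^{r-1}$ is to guarantee that at every step there are enough edges available to continue, despite the small number of vertices already used. Recall that $C_\ell^r$ has $\ell$ edges, each consecutive pair sharing exactly one vertex, so embedding it amounts to choosing edges $e_1,\dots,e_\ell$ of $G$ where $e_i\cap e_{i+1}$ is a single vertex (indices mod $\ell$) and all other vertices are distinct.

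\medskip

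First I would set up the link/degree machinery. For a vertex $v$, its degree is the number of edges through $v$; since each edge contains exactly one vertex per part, summing degrees over a single part $V_j$ counts every edge once, so $\sum_{v\in V_j} d(v)=e(G)$. Thus the average degree in each part is $e(G)/s>cs^{r-2}$, and a linear fraction of the edges avoid any fixed small set of "low-degree" vertices. The key quantitative point is that the total number of vertices in $G$ is only $rs$, whereas a partial loose cycle uses at most $\ell(r-1)$ of them — a constant. So at each of the $\ell$ embedding steps I must extend the current loose path by one more edge that reuses exactly the current endpoint vertex and is otherwise disjoint from the $O(\ell r)$ already-used vertices. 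The number of edges incident to the endpoint is its degree, and the number of edges meeting the forbidden used set is at most $\ell(r-1)$ times the maximum degree; choosing $c$ large enough relative to $\ell$ and $r$ forces the available degree to dominate, so an admissible extending edge always exists.

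\medskip

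Concretely, I would first pass to a subgraph in which every vertex has degree at least $e(G)/(2rs)\ge \tfrac12 cs^{r-2}$, by iteratively deleting vertices of degree below this threshold; each deletion removes fewer than $\tfrac12 cs^{r-2}$ edges, and there are at most $rs$ vertices, so fewer than $\tfrac{r}{2}cs^{r-1}<e(G)$ edges are removed in total (taking $c$ to absorb the factor), leaving a nonempty subgraph $G'$ of large minimum degree. Within $G'$, I build the loose cycle greedily: pick any edge $e_1$; having chosen a loose path $e_1,\dots,e_i$ with current free endpoint $v_i$ and set of used vertices $U$ with $|U|\le i(r-1)+1\le \ell(r-1)$, I select an edge $e_{i+1}\ni v_i$ with $e_{i+1}\cap U=\{v_i\}$. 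Such an edge exists because $v_i$ has at least $\tfrac12 cs^{r-2}$ incident edges, while the number of edges through $v_i$ meeting $U\setminus\{v_i\}$ is at most $|U|\cdot\Delta$; if instead of a raw degree bound I control this more carefully by deleting, after each step, all edges meeting the current $U\setminus\{v_i\}$ (only $O(\ell r)\cdot\Delta$ of them, which must stay below the available degree), the choice succeeds for all $i<\ell$. The final closing edge $e_\ell$ must meet both $v_{\ell-1}$ and $v_1$ in their shared parts; a counting argument on the link of $v_{\ell-1}$ again yields an admissible edge.

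\medskip

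I expect the main obstacle to be the \emph{closing step}, where the last edge must simultaneously meet the two designated endpoints (and nothing else used), since this is a two-point constraint rather than the one-point constraint of the intermediate steps, and a naive degree bound may not immediately suffice. The cleanest remedy is to reserve, from the outset, two high-degree "anchor" vertices and route the path between them, or to note that the codegree of a pair of vertices is still $\Omega(s^{r-3})$ on average so that, after excluding the $O(\ell r)$ used vertices, a common neighbourhood edge survives; either way the constant $c=c(r,\ell)$ can be chosen large enough to force closure. A secondary technical point is ensuring that parity/part constraints of the cyclic structure are respected — namely that the edges alternate shared vertices across the correct parts — which is handled automatically by the $r$-partite structure once one fixes a cyclic pattern of which part supplies each shared vertex.
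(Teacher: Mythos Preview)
First, note that the paper does not prove this lemma: it is quoted from Mubayi and Wang, and indeed it is an immediate consequence of the known Tur\'an bound $\ex_r(N,C_\ell^r)=O(N^{r-1})$ for loose cycles (F\"uredi--Jiang, Kostochka--Mubayi--Verstra\"ete), applied with $N=rs$.

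Your greedy scheme has two problems. The minor one is the estimate ``at most $|U|\cdot\Delta$'' for the edges through $v_i$ that meet $U\setminus\{v_i\}$: since $\Delta$ may be as large as $s^{r-1}$, this is useless against a lower bound of order $s^{r-2}$ on $d(v_i)$. The usable bound is $|U|\cdot s^{r-2}$, because any edge through $v_i$ and a fixed $u$ is determined by its remaining $r-2$ coordinates; with that correction the path-extension step does go through for $c$ large in terms of $r$ and $\ell$. The real gap is the closing step. There you must produce a single edge containing both already-placed endpoints $v_0$ and $v_{\ell-1}$, so what you need is that their \emph{codegree} exceed roughly $\ell r\cdot s^{r-3}$. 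Neither of your proposed remedies delivers this: reserving two high-\emph{degree} anchors says nothing about their codegree, which can be~$0$ even when both degrees are $\Theta(s^{r-2})$; and the fact that the \emph{average} codegree over $V_i\times V_j$ is $\Omega(s^{r-3})$ says nothing about the specific pair to which $\ell-1$ greedy steps have already committed you (and if instead you fix a high-codegree pair first, you have merely shifted the two-point targeting problem to the penultimate edge of the path). A correct direct argument has to build in the two-point constraint from the outset---for instance by working inside the link of a high-degree vertex, or via a long-path-plus-chord counting argument---and is genuinely more delicate than a one-pass greedy embedding. The shortest route, and effectively the one the paper takes, is simply to invoke the general Tur\'an bound for $C_\ell^r$.
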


\begin{proof}[Proof of Lemma~\ref{thm5}]
  Let $G$ be a balanced $(r-1)$-partite $(r-1)$-graph with each part
  of size~$s$ such that $V(G)\subseteq [n]$.  For any
  edge-coloring~$\chi\:G\to[n]$, let
  $Z=\im\chi=\{z_e\: e\in G\}\subseteq [n]$ be the set of all used
  colors.  We first argue that if $G^*$ is $C_{\ell}^r$-free, then
  $|Z| < (c_{\ref{lem10}}+r)s^{r-2}$, where
  $c_{\ref{lem10}}=c_{\ref{lem10}}(r-1, \ell)$ is the constant from
  Lemma~\ref{lem10}.  Indeed, if $|Z| \ge (c_{\ref{lem10}}+r)s^{r-2}$,
  then
  $|Z\setminus V(G)| \ge (c_{\ref{lem10}}+r)s^{r-2} - s(r-1) >
  c_{\ref{lem10}} s^{r-2}$.
  For each color $v\in Z\setminus V(G)$ pick an edge in~$G$ with
  color~$v$.  We get a subgraph $G'\subseteq G$ that is strongly
  rainbow colored with
  $e(G') = |Z\setminus V(G)| > c_{\ref{lem10}}s^{r-2}$.  By
  Lemma~\ref{lem10}, there is a~$C_{\ell}^{r-1}$ in~$G'$ that is
  strongly rainbow, which contradicts the fact that~$G^*$ is
  $C_{\ell}^r$-free (see Remark~\ref{rem:strongrbw}).

  Let~$c=c(r,\ell)=c_{\ref{lem10}}+r$.  We now estimate the
  number~$f_r(n,\ell,G)$ of valid edge-colorings~$\chi$ the graph~$G$
  may have as follows: choose at most~$cs^{r-2}$ colors and then color
  each edge of~$G$ in all possible ways.  We obtain
  \begin{equation}
    \label{eq:3.2}
    f_r(n,\ell,G)\le n^{cs^{r-2}}(cs^{r-2})^{e(G)},
  \end{equation}
  as required.
\end{proof}

Our next result gives a decomposition of any $r$-graph $G$ into balanced $r$-partite $r$-graphs that are not necessarily complete.

\begin{lem}\label{thm6}
  Suppose $r\ge2$ and
  \begin{equation}
    \label{eq:5}
    1\leq s\leq\(1-{1\over r}\)n.
  \end{equation}
  Then any $n$-vertex $r$-graph $G$ can be decomposed into~$t$
  balanced $r$-partite $r$-graphs $G_i\subseteq K_r(s)$
  $(1\leq i\leq t)$, where $t\le(n/s)^r\lceil c\log n\rceil$ and
  \begin{equation}
    \label{eq:6}
    c=c(r)=\frac{-r}{\log\(1-r!/r^r\)}.    
  \end{equation}
\end{lem}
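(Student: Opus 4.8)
The plan is to build the decomposition by a probabilistic argument over $T=\lceil c\log n\rceil$ independent ``rounds'', in each of which a random $r$-colouring of the vertices produces a family of candidate $r$-partite pieces; each edge will then be assigned to a piece in the first round in which it becomes rainbow.

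Concretely, for each round $\tau\in[T]$ I would choose a colouring $\psi_\tau\:[n]\to[r]$ by colouring each vertex independently and uniformly at random. Writing $C^\tau_k=\psi_\tau^{-1}(k)$ for the colour classes, I split each $C^\tau_k$ into $\lceil|C^\tau_k|/s\rceil$ blocks of size at most~$s$, and for every choice of one block $B_k$ from each colour class $k\in[r]$ I introduce a candidate piece whose $k$-th part is $B_k$. Since the $B_k$ lie in distinct colour classes they are pairwise disjoint and each has size at most~$s$, so any such piece is an $r$-partite $r$-graph contained in $K_r(s)$. Call an edge $e$ \emph{rainbow} in round $\tau$ if $\psi_\tau$ is injective on $e$; this happens with probability exactly $r!/r^r$, and when it does, $e$ has one vertex in each colour class and hence lies across exactly one candidate piece, to which I assign it (choosing the least $\tau$ for which $e$ is rainbow, so that the assignment is a genuine edge-decomposition).

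To see that $T=\lceil c\log n\rceil$ rounds suffice, I would bound the probability that a fixed edge is non-rainbow in all rounds by $(1-r!/r^r)^T$. As $G$ has at most $\binom nr<n^r$ edges, the expected number of edges that are never rainbow is less than $n^r(1-r!/r^r)^T$; with $c=c(r)$ given by \eqref{eq:6} and logarithms to base~$2$, the choice $T=\lceil c\log n\rceil$ makes this quantity less than~$1$, so some outcome $\psi_1,\dots,\psi_T$ leaves every edge rainbow in at least one round. This yields the required decomposition into pieces $G_i\subseteq K_r(s)$, the number of pieces being at most $T$ times the number of candidate pieces produced in a single round, namely $\prod_{k=1}^r\lceil|C^\tau_k|/s\rceil$.

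The main work, and the only place the hypothesis \eqref{eq:5} is used, is to bound this per-round count by $(n/s)^r$. Since $\sum_{k=1}^r|C^\tau_k|=n$, the arithmetic--geometric mean inequality gives $\prod_{k=1}^r\lceil|C^\tau_k|/s\rceil\le\prod_{k=1}^r\big(|C^\tau_k|/s+1\big)\le\big(n/(rs)+1\big)^r$, and the assumption $s\le(1-1/r)n$ is exactly what guarantees $n/(rs)+1\le n/s$, so that this is at most $(n/s)^r$. Combining, $t\le(n/s)^r\lceil c\log n\rceil$, as claimed. I expect the delicate points to be this counting estimate (getting the clean $(n/s)^r$ rather than $\lceil n/s\rceil^r$, which is what forces the exponent-$r$ form of the hypothesis) and checking the degenerate cases, for instance an empty colour class in some round, for which the corresponding product and round simply contribute nothing.
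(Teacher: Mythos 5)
Your proof is correct and takes essentially the same approach as the paper: independent uniform random $r$-partitions (your colourings), the same capture probability $r!/r^r$ with the same choice of $c$ making the failure probability at most $n^{-r}$, and the same AM--GM computation using \eqref{eq:5} to bound the per-round piece count by $(n/s)^r$. The only (minor, welcome) refinement is that you explicitly assign each edge to its first rainbow round, making the edge-disjointness of the decomposition explicit.
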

\begin{proof}
  Generate independently and uniformly $\lceil c\log n\rceil$ random
  $r$-partitions~$\Pi_i=(V_j^i)_{1\leq j\leq r}$ of~$[n]$
  ($1\leq i\leq\lceil c\log n\rceil$).  Thus, $v\in V_j^i$ happens with
  probability~$1/r$ for each~$v$, $i$ and~$j$ independently.  Note
  that each $r$-tuple is `captured' by a given~$\Pi_i$ with
  probability~$r!/r^r$.  Thus the probability that a given $r$-tuple
  is \textit{not} captured by \textit{any} $r$-partition~$\Pi_i$ is at
  most 
  \begin{equation}
    \label{eq:8}
    \(1-\frac{r!}{r^r}\)^{\lceil c\log n\rceil}\leq2^{-r\log n} = n^{-r}.  
  \end{equation}
  The union bound implies that there is a
  collection~$\Pi_i=(V_j^i)_{1\leq j\leq r}$
  ($1\leq i\leq\lceil c\log n\rceil$) of $r$-partitions of~$[n]$
  capturing all $r$-tuples of~$[n]$.

  Let us now fix~$i$ and consider~$\Pi_i$.  We now produce,
  from~$\Pi_i$, at most~$(n/s)^r$ subgraphs~$G'$ of~$G$
  with~$G'\subseteq K_r(s)$, with the collection of such~$G'$ capturing
  every $r$-tuple captured by~$\Pi_i$.  Note that doing this for
  every~$i$ completes the proof of our lemma.  To simplify notation,
  let~$V_j=V_j^i$ ($1\leq j\leq r$).

  Partition each~$V_j$ into blocks~$W_k^j$
  ($1\leq k\leq\lceil|V_j|/s\rceil$) arbitrarily, but
  with~$|W_k^j|\leq s$ for all~$k$.  We now consider all the vectors
  \begin{equation}
    \label{eq:7}
    W(k_1,\dots,k_r)=(W_{k_1}^1,\dots,W_{k_r}^r),
  \end{equation}
  where $1\leq k_j\leq\lceil|V_j|/s\rceil$ for all $1\leq j\leq r$.
  Clearly, each such~$W(k_1,\dots,k_r)$ induces, in a natural way, an
  $r$-partite subgraph~$G(k_1,\dots,k_r)$ of~$G$
  with~$G(k_1,\dots,k_r)\subset K_r(s)$.  Moreover,
  those~$G(k_1,\dots,k_r)$ capture all the $r$-tuples captured
  by~$\Pi_i$.  It now suffices to prove that the number of
  such~$G(k_1,\dots,k_r)$ is at most~$(n/s)^r$.  The number of choices
  we have for~$(k_j)_{1\leq j\leq r}$ is
  \begin{multline}
    \label{eq:9}\quad
    \llceil{|V_1|\over s}\rrceil\dots\llceil{|V_r|\over s}\rrceil
    <{1\over s^r}(|V_1|+s)\dots(|V_r|+s)\\
    \leq{1\over s^r}\Big({1\over r}\sum_{1\leq j\leq
    r}(|V_j|+s)\Big)^r
    \leq\({1\over sr}(n+sr)\)^r
    \leq\(n\over s\)^r,\quad
  \end{multline}
  where we used~\eqref{eq:5} in the last inequality above.
\end{proof}

Now we are ready to prove Lemma~\ref{lem8}.

\begin{proof}[Proof of Lemma~\ref{lem8}]
  Let $G_0$ be an $(r-1)$-graph with $V(G_0)\subseteq [n]$.  In what
  follows, we assume~$n$ is large enough for our inequalities to hold.
  We first count the number of edge-colored $(r-1)$-graphs~$G$ such
  that its underlying $(r-1)$-graph is~$G_0$ and~$G^*$ is
  $C_\ell$-free.  By Lemma~\ref{thm6}, we decompose~$G_0$ into
  balanced $(r-1)$-partite $(r-1)$-graphs $G_1,\dots,G_t$, such that,
  for all $i\in [t]$, each vertex class of~$G_i$ contains
  $s=\lfloor(\log n)^2\rfloor$ vertices and
  $t\le2c_{\ref{thm6}}(n/s)^{r-1}\log n$,
  where~$c_{\ref{thm6}}=c_{\ref{thm6}}(r-1)$ is as given by
  Lemma~\ref{thm6}.  Note that
  $ts^{r-2}\le3c_{\ref{thm6}} n^{r-1}/\log n$ and, since
  $G_1,\dots,G_t$ form a decomposition of~$G_0$, we have
  $\sum_{i=1}^t e(G_i) = e(G_0)$.  Moreover, since~$G^*$ is
  $C_\ell$-free, each $G_i^*$ has to be $C_\ell$-free.  By
  Lemma~\ref{thm5}, the number of valid edge-colorings of~$G_0$ is at
  most
  \begin{multline}
    \label{eq:10}\quad
    \prod_{i=1}^t f_r(n, \ell, G_i)
    \le\prod_{i=1}^tn^{c_{\ref{thm5}}s^{r-2}}(c_{\ref{thm5}}s^{r-2})^{e(G_i)} 
    =n^{c_{\ref{thm5}}ts^{r-2}}(c_{\ref{thm5}}s^{r-2})^{e(G_0)}\\
    \leq n^{3c_{\ref{thm5}}c_{\ref{thm6}}n^{r-1}/\log n}
    (c_{\ref{thm5}}s^{r-2})^{n^{r-1}}.\quad
  \end{multline}
  Since there are at most $2^{n^{r-1}}$ graphs~$G_0$ as above
  and~$s=\lfloor(\log n)^2\rfloor$, summing over all~$G_0$ gives
  \begin{align}
    \log g_r(n,\ell)
    &\leq\log\big(2^{n^{r-1}}
    n^{3c_{\ref{thm5}}c_{\ref{thm6}}n^{r-1}/\log n}
    (c_{\ref{thm5}}s^{r-2})^{n^{r-1}}
    \big)\nonumber\\
    &\le(3c_{\ref{thm5}}c_{\ref{thm6}}+1)n^{r-1}
    +(\log c_{\ref{thm5}}+(r-2)\log s)n^{r-1}\nonumber\\
    &\le (3c_{\ref{thm5}}c_{\ref{thm6}}+1)n^{r-1}
    +(\log c_{\ref{thm5}}+2(r-2)\log\log n)n^{r-1}\nonumber\\
    &\leq2rn^{r-1}\log\log n,    \label{eq:11}
  \end{align}
  where the last inequality follows for all large enough~$n$.
\end{proof}

\section*{Acknowledgement}
We thank Dhruv Mubayi and Lujia Wang for helpful discussions.

\bibliographystyle{amsplain}
\bibliography{Sep2016,alternative}

\end{document}